\newcommand\Tan{\qopname\relax o{Tan}}
\newtheorem{theorem}{Theorem}
\title{Longest Straight Line Paths on Water or Land on the Earth}
\author[1]{Rohan Chabukswar\thanks{ChabukR@utrc.utc.com}}
\author[2]{Kushal Mukherjee\thanks{KushMukh@in.ibm.com}}
\affil[1]{United Technologies Research Center Ireland}
\affil[2]{IBM Research India}
\begin{document}

\maketitle

\begin{abstract}
  There has been some interest recently in determining the longest distance one can sail for on the earth without hitting land, as well as in the converse problem of determining the longest distance one could drive for on the earth without encountering a major body of water. In its basic form, this is an optimisation problem, rendered chaotic by the presence of islands and lakes, and indeed the fractal nature of the coasts. In this paper we present a methodology for calculating the two paths using the branch-and-bound algorithm.
\end{abstract}

\section{Introduction}
On December 29, 2012, Reddit user \texttt{kepleronlyknows} posted a map (Figure \ref{fig:rQlk4}) to r/MapPorn \cite{kepleronlyknows} showing the longest straight line that can be sailed for on the earth without hitting land, from Pakistan to eastern Russia. This was generated a lot of interest and led to subsequent attempts to prove and disprove the user, along with discussions on solving the converse problem of determining the longest distance that can be driven on land without hitting a major water body.

\begin{figure}[htbp]
  \centering
  \includegraphics[width=\textwidth]{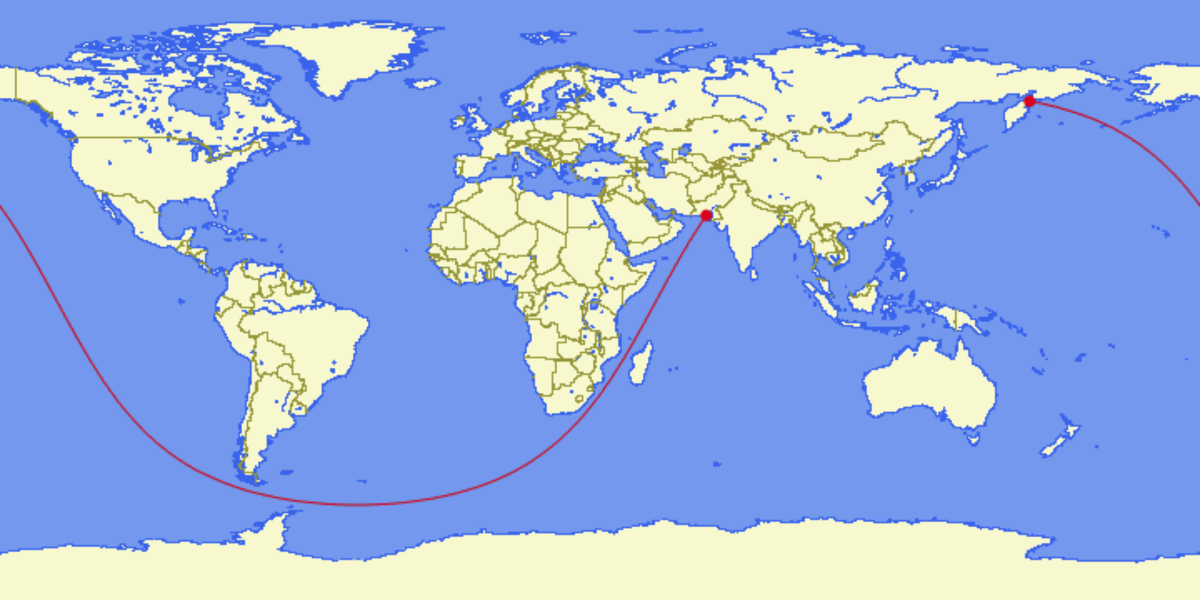}
  \caption{\texttt{kepleronlyknows}'s original map posted to r/MapPorn}
  \label{fig:rQlk4}
\end{figure}

In their basic forms, the two are optimisation problems in two dimensions --- however, the objective functions are discontinuous due to the presence of islands (in the case of the longest sailable path), and lakes (in the case of the longest drivable path). Globally optimal solutions would require an exhaustive search over the two dimensions, a not inconsiderable obstacle in terms of time and memory. For a 1-arcminute resolution ($\sim$1.855 km at the equator), which is the resolution provided by the ETOPO1 Global Relief model of Earth's surface by National Oceanic and Atmospheric Administration, there would be 233,280,000 great circles to consider to find the global optimum, and each great circle would have 21600 individual points to process --- a staggering 5,038,848,000,000 points to verify.
\subsection{Previous Work}
Apart from the discovery of the longest sailable path by \texttt{kepleronlyknows}, Reddit user \texttt{Groke} also posted \cite{Groke} a path from Norway to Antarctica in a straight line without hitting land, which is shorter than the path from Pakistan to Eastern Russia. YouTube user David Cooke claimed \cite{Cooke} to have discovered an even longer path (`The Cooke Passage') from Quebec to British Columbia, which was later proved \cite{GeoGarage} to not be a straight line in a blog post at GeoGarage, a nautical charts platform.

Guy Bruneau of IT/GIS Consulting services calculated \cite{Bruneau} a path from Eastern China to Western Liberia as being the longest distance you can travel between two points in straight line without crossing any ocean or any major water bodies. However, the path crosses through the Dead Sea (which can be considered to be a major water body), and hence does not satisfy the constraints originally set out.
\section{Problem Statement}\label{assumptions}
Thus, in an effort to prove or disprove \texttt{kepleronlyknows}, and to find the longest straight line path on land without crossing a major water body, the authors set out to systematically arrive at the globally optimal solutions to the problem.

The land/water data for the whole earth required to set up the problem was not available readily. In the end, the data set used was the ETOPO1 Global Relief Model \cite{etopo1}, and consider any negative altitude to be water and positive altitude to be land. Although this is not strictly true in general, in the absence of actual data it is the closest approximation to real relief that is freely available.
\subsection{ETOPO1 Global Relief Model}
ETOPO1 is a 1 arc-minute global relief model of Earth's surface that integrates land topography and ocean bathymetry. It was built from numerous global and regional data sets, and is available in ``Ice Surface'' (top of Antarctic and Greenland ice sheets) and ``Bedrock'' (base of the ice sheets) versions. The version suited for this problem was the ``Ice Surface'' one, since a sailable path should not hit an ice layer, and technically ice could be driven upon.
\subsection{Optimisation}
The real crux of the problems lay in solving the optimisation problem globally. After an initial aborted attempt to do a brute-force search across the paths, the authors settled on utilising the branch-and-bound algorithm.
\section{Problem Formulation}
The first step in formulating the problem was to implement a method for enumerating the different straight line paths on the Earth. To do this, it is important to note that all straight line paths lie on a great circle. Each great circle can have as many paths as there are crossings of the great circle with the land-water boundary, but since the interest lies in only the longest paths (either land or water) in each great circle, enumerating all the great circles would be enough. This can be done in a systematic way by observing that any great circle (except the equator itself) intersects the equator at two antipodal points, and at a specific angle. Excepting the trivial case of the equator, the enumeration can thus be chosen by choosing the longitude of the intersection with the equator (the ``origin''), and the angle between the great circle and the equator at that point (the ``heading''), either of which could be chosen between $0$ and $360^\circ$. This counts each great circle four times, however, so the actual choice can be restricted to between $0$ and $180^\circ$ for each. Each point of the great circle can then be pinpointed by the angle of the vector in the plane of the great circle (the reference vector can be arbitrarily set to the origin). The domain of this angle ($\phi$) is between $0$ and $360^\circ$.

Any great circle defined by the origin ($\delta$) and the heading ($\alpha$) can thus be denoted by $\text{GC}\left(\delta,\alpha\right)$.

To match the ETOPO1 data set, which is given as altitude in meters at each latitude and longitude, each point ($\phi$) on each great circle $\text{GC}\left(\delta,\alpha\right)$ needs to be translated into latitude ($\beta$) and longitude ($\lambda$), which can be done using basic spherical trigonometry:
\begin{align}
  \beta&=\sin^{-1}\left(\sin\alpha\sin\phi\right),\\
  \lambda&=\delta+\Tan^{-1}\left(\cos\alpha\sin\phi,\cos\phi\right),
\end{align}
where the $\Tan^{-1}\left(y,x\right)$ is the four-quadrant arctangent function.

Traversing each great circle defined by $\left(\beta,\lambda\right)$, the relief encountered $r_{\left(\delta,\alpha\right)}\left(\phi\right)$, $\phi\in\left(-180^\circ,180^\circ\right]$ is given by
  \begin{equation}
    r_{\left(\delta,\alpha\right)}\left(\phi\right)=\textsc{etopo1}\left(\sin^{-1}\left(\sin\alpha\sin\phi\right),\delta+\Tan^{-1}\left(\cos\alpha\sin\phi,\cos\phi\right)\right)
  \end{equation}

  For example, as shown in Figure \ref{fig:globe}, $\text{GC}\left(163^\circ44',151^\circ22'\right)$ passes through both the Challenger Deep ($11^\circ20'$ N, $142^\circ12'$ E) at an angular distance of $24^\circ12'$ from the origin, and Mount Everest ($27^\circ59'$ N, $86^\circ56'$ E) at an angular distance of $78^\circ22'$ from the origin.

  \begin{figure}[htbp]
    \centering
    \includegraphics[width=0.5\textwidth]{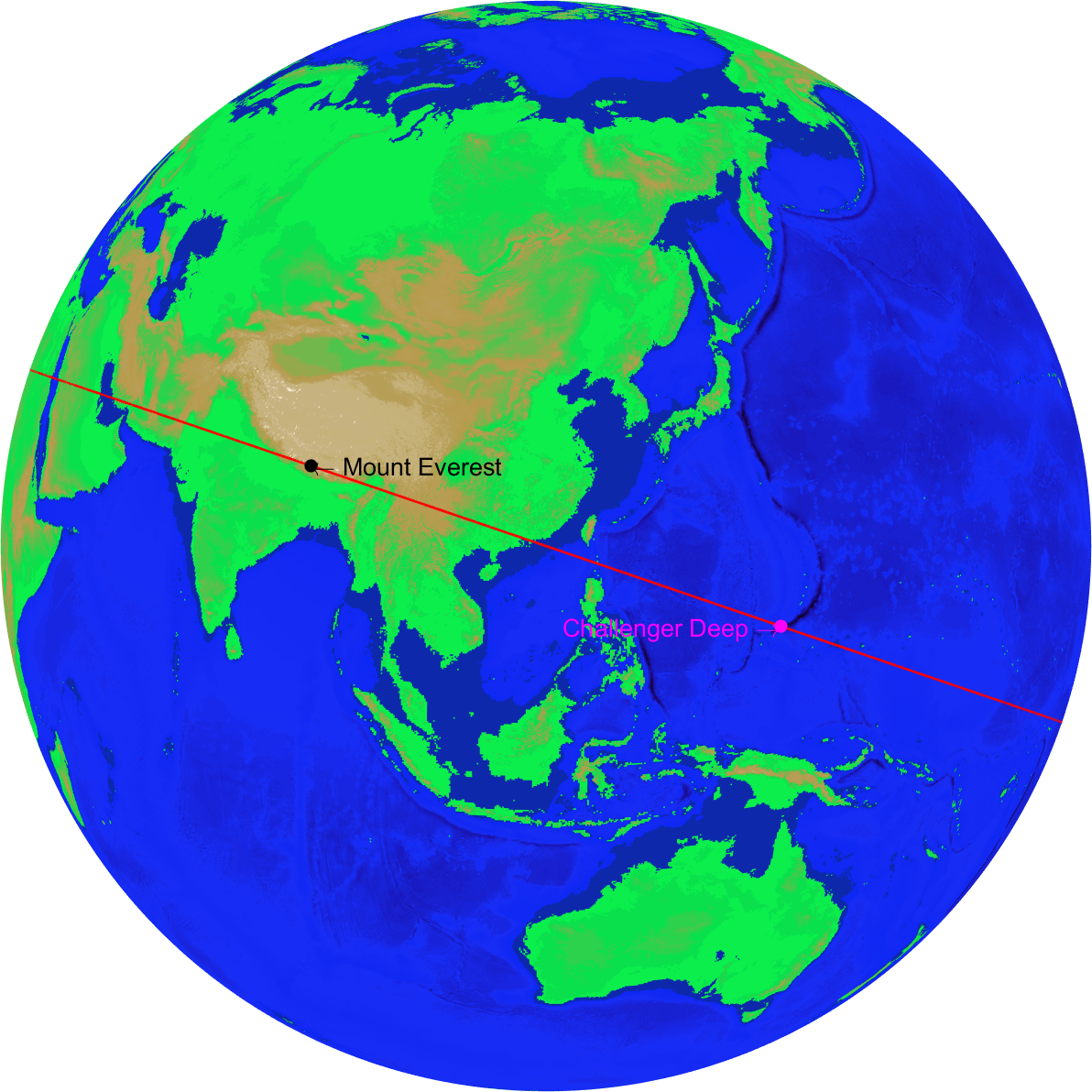}
    \caption{Great circle passing through the lowest and highest points on the Earth.}
    \label{fig:globe}
  \end{figure}

  The relief encountered $r_{\left(163^\circ44',151^\circ22'\right)}\left(\phi\right)$, $\phi\in\left(-180^\circ,180^\circ\right]$ is shown in Figure \ref{fig:relief}.

    \begin{figure}[htbp]
      \centering
      \subfloat[Actual relief]{\includegraphics[width=\textwidth]{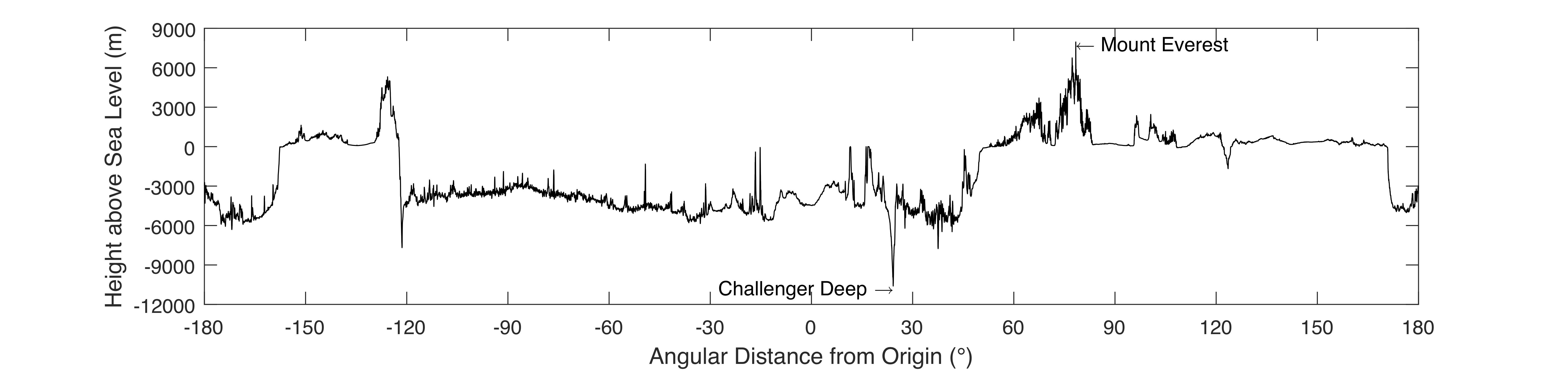}\label{fig:relief}}\\
      \subfloat[Land/Water classification]{\includegraphics[width=\textwidth]{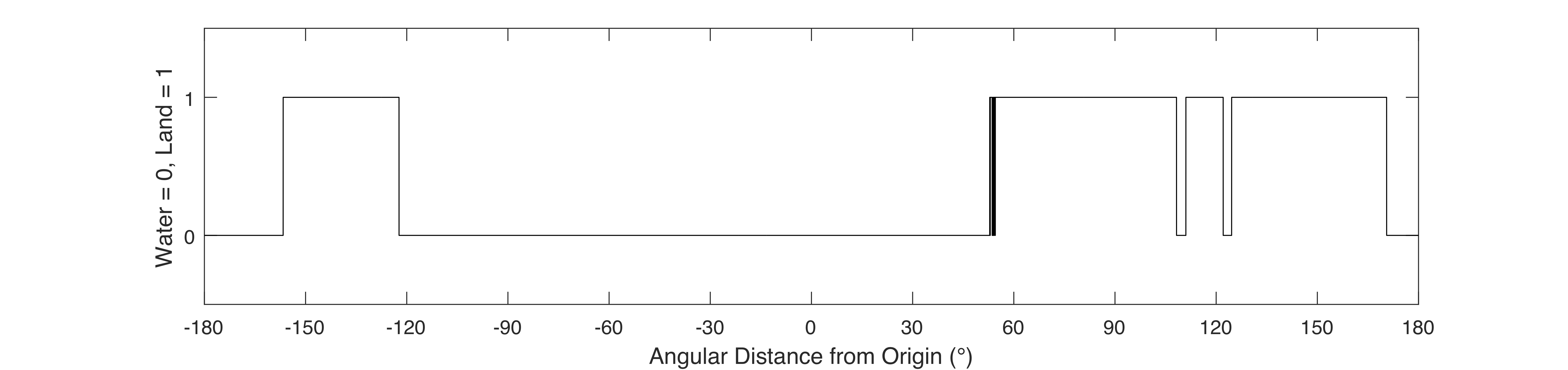}\label{fig:lorw}}
      \caption{Relief along the great circle passing through the lowest and highest points on the Earth.}
    \end{figure}

    Given the relief along each great circle, the assumption of land and water being given by positive and negative heights above sea level can be applied to classify each point of the great circle being either land or water:

    \begin{equation}
      \text{land}_{\left(\delta,\alpha\right)}\left(\phi\right)=\begin{cases}1&\mbox{ if } r_{\left(\delta,\alpha\right)}\left(\phi\right)>0\\0&\mbox{ if } r_{\left(\delta,\alpha\right)}\left(\phi\right)<0\\\text{indeterminate}&\mbox{ if } r_{\left(\delta,\alpha\right)}\left(\phi\right)=0\end{cases}.
    \end{equation}

    The indeterminacy is resolved conservatively by assuming $r_{\left(\delta,\alpha\right)}\left(\phi\right)=0$ to be land when searching for the longest water path and water when searching for the longest land path. For the example great circle, $\text{land}_{\left(163^\circ44',151^\circ22'\right)}\left(\phi\right)$, $\phi\in\left(-180^\circ,180^\circ\right]$ is shown in Figure \ref{fig:lorw}.

      Once the classification is obtained, it is a simple matter to determine the longest land and water paths in the great circle, which form the objective functions that need to be maximised.

      \section{Branch-And-Bound}
      Branch-and-bound is a method to solve combinatorial optimisation problems, especially when it is possible to get bounds on the objective function for a subset in the space of solutions. 

      For the problem addressed in this paper, we continue to denote each great circle by the pair of angles that uniquely characterise it as $\text{GC}\left(\delta,\alpha\right)$. A \emph{set} of solutions refers to a set of great circle trajectories that intersect the equator between latitudes $\delta_{\min}$ and $\delta_{\max}$ and with heading in the range $\alpha_{\min}$ to $\alpha_{\max}$.  We denote the set as $\left[\delta_{\min},\delta_{\max}\right]\times\left[\alpha_{\min},\alpha_{\max}\right]$.
      Each great circle $\text{GC}\left(\delta,\alpha\right)$ has an associated length $l\left(\delta,\alpha\right)$ that is defined as the maximum distance that can be traversed along the great circle without hitting land.  

      \subsection{Relaxation Techniques}
      The next task is to obtain an upper bound on the length of longest continuous section of sea $l\left(\delta,\alpha\right)$ along all great circles such that $\delta\in\left[\delta_{\min},\delta_{\max}\right],\alpha\in\left[\alpha_{\min},\alpha_{\max}\right]$.

      \begin{theorem}
        Considering $\text{GC}\left(\frac{\delta_{\min}+\delta_{\max}}{2},\frac{\alpha_{\min}+\alpha_{\max}}{2}\right)$ as representative of the set of great circles uch that $\delta\in\left[\delta_{\min},\delta_{\max}\right],\alpha\in\left[\alpha_{\min},\alpha_{\max}\right]$, the maximum angular separation between the representative great circle and any other great circle in the set is always bounded by $\left|\frac{\delta_{\max}-\delta_{\min}}{2}\right|+\left|\frac{\alpha_{\max}-\alpha_{\max}}{2}\right|$.
        \label{thm:upperbound}
      \end{theorem}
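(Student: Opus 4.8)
The plan is to reduce the statement to a bound on how far a single point of a great circle can move when the two defining parameters are perturbed, and then to add the two contributions. First I would write the point at parameter $\phi$ of $\text{GC}(\delta,\alpha)$ in Cartesian coordinates; translating the relations for $\beta$ and $\lambda$ stated above (centre of the sphere at the origin) gives
\[
  \vec p(\delta,\alpha,\phi)=\bigl(\cos\delta\cos\phi-\sin\delta\cos\alpha\sin\phi,\ \sin\delta\cos\phi+\cos\delta\cos\alpha\sin\phi,\ \sin\alpha\sin\phi\bigr),
\]
a unit vector orthogonal to the pole $(\sin\alpha\sin\delta,-\sin\alpha\cos\delta,\cos\alpha)$ of $\text{GC}(\delta,\alpha)$. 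A short computation then gives the uniform derivative bounds $\lvert\partial\vec p/\partial\delta\rvert=\sqrt{1-\sin^2\alpha\sin^2\phi}\le 1$ and $\lvert\partial\vec p/\partial\alpha\rvert=\lvert\sin\phi\rvert\le 1$.

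Next, fix $\phi$ and set $\bar\delta:=\tfrac{\delta_{\min}+\delta_{\max}}{2}$ and $\bar\alpha:=\tfrac{\alpha_{\min}+\alpha_{\max}}{2}$. Join $\vec p(\bar\delta,\bar\alpha,\phi)$ to $\vec p(\delta,\alpha,\phi)$ along the two-leg curve on the sphere that first varies $\delta$ from $\bar\delta$ to $\delta$ (holding $\alpha=\bar\alpha$ and $\phi$ fixed) and then varies $\alpha$ from $\bar\alpha$ to $\alpha$ (holding $\delta$ and $\phi$ fixed). By the derivative bounds the length of this curve is at most $\lvert\delta-\bar\delta\rvert+\lvert\sin\phi\rvert\,\lvert\alpha-\bar\alpha\rvert\le\lvert\delta-\bar\delta\rvert+\lvert\alpha-\bar\alpha\rvert$, and since a geodesic is no longer than any curve joining its endpoints, the angular distance between $\vec p(\bar\delta,\bar\alpha,\phi)$ and $\vec p(\delta,\alpha,\phi)$ is at most $\bigl\lvert\tfrac{\delta_{\max}-\delta_{\min}}{2}\bigr\rvert+\bigl\lvert\tfrac{\alpha_{\max}-\alpha_{\min}}{2}\bigr\rvert$. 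This estimate does not depend on $\phi$, so it bounds the angular separation between the representative great circle and any other circle of the set --- both the largest angular gap between correspondingly parametrised points and, a fortiori, the largest geodesic distance from a point of one circle to the other --- which is the claim.

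Since the computations are routine, I expect the only genuinely delicate point to be conceptual: fixing exactly what ``angular separation between two great circles'' is meant to denote, and confirming that the quantity actually used afterwards in the branch-and-bound step --- the radius by which the land mask of the representative circle must be dilated in order to dominate every circle in the box $[\delta_{\min},\delta_{\max}]\times[\alpha_{\min},\alpha_{\max}]$ --- is no larger than the bound just obtained. The derivative estimates $\lvert\partial_\delta\vec p\rvert\le1$ and $\lvert\partial_\alpha\vec p\rvert\le1$ are what make the factor-free sum of half-widths come out, so those are the inequalities I would check most carefully; the rest is the triangle inequality and ``length $\ge$ distance''.
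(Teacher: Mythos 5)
Your proof is correct, but it follows a genuinely different route from the paper's. The paper writes down an exact spherical-trigonometry expression for the maximum angular separation between the two circles (essentially the angle between their poles), observes that this expression tends to $\sqrt{\Delta\theta^2+\Delta\phi^2}$ only in the limit of small parameter increments (admitting a $2.62\%$ discrepancy at $45^\circ$), and then invokes $\left|\Delta\theta\right|+\left|\Delta\phi\right|\geq\sqrt{\Delta\theta^2+\Delta\phi^2}$; the step from ``approximately $\sqrt{\Delta\theta^2+\Delta\phi^2}$'' to ``bounded by the $\ell^1$ sum'' is left implicit and is not fully rigorous as written. You instead bound the motion of each individual point $\vec p(\delta,\alpha,\phi)$ under perturbation of the parameters, using the derivative bounds $\left|\partial\vec p/\partial\delta\right|\leq1$ and $\left|\partial\vec p/\partial\alpha\right|=\left|\sin\phi\right|\leq1$ together with a two-leg path and ``length $\geq$ geodesic distance''. (Your Cartesian parametrisation and both derivative norms check out, and the pointwise bound at corresponding $\phi$ does dominate the point-to-circle distance, hence any reasonable notion of separation between the circles.) What your approach buys is rigour --- no limiting approximation is needed, the $\ell^1$ bound falls out exactly --- plus the sharper observation that the $\alpha$ contribution carries a factor $\left|\sin\phi\right|$, which could tighten the erosion radius near the equator crossings. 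What the paper's approach buys is an exact closed-form for the worst-case separation, which could in principle yield a tighter (if messier) bound than either $\ell^1$ or $\ell^2$. Your closing caveat is also well placed: the quantity the branch-and-bound step actually needs is the dilation radius making the representative circle's land mask dominate every circle in the box, and that is exactly the Hausdorff-type quantity your pointwise estimate controls.
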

      \begin{proof}
        Let $\theta=\frac{\alpha_{\max}+\alpha_{\min}}{2}$, $\Delta\theta=\alpha_{\max}-\alpha_{\min}$, and $\Delta\phi=\delta_{\max}-\delta_{\min}$. Using spherical trigonometry, the maximum angular separation between them is $\cos^{-1}\left(\cos\left(\Delta\theta\right)\cdot\cos^2\left(\frac{\Delta\phi}{2}\right)+\cos\left(2\theta\right)\cdot\sin^2\left(\frac{\Delta\phi}{2}\right)\right)$.

        This is non-trivial to use, however, the maximum deviation occurs when $\theta=90^\circ$, and has a theoretical upper limit of $\sqrt{\Delta\theta^2+\Delta\phi^2}$, as $\Delta\theta,\Delta\phi\to0^\circ$. Even for $\Delta\theta,\Delta\phi=45^\circ$, the limit is accurate to $2.62\%$. Since we know that $\left\|\Delta\theta\right\|+\left\|\Delta\phi\right\|\geq\sqrt{\Delta\theta^2+\Delta\phi^2}$, the angular separation is always bounded by this value, which is, $\left|\frac{\delta_{\max}-\delta_{\min}}{2}\right|+\left|\frac{\alpha_{\max}-\alpha_{\max}}{2}\right|$.
      \end{proof}
      
      Consider the great circle $\text{GC}\left(\frac{\delta_{\min}+\delta_{\max}}{2},\frac{\alpha_{\min}+\alpha_{\max}}{2}\right)$ as representative of the set of great circles. Using Theorem~\ref{thm:upperbound}, the maximum separation between the representative great circle and any other great circle in the set is always bounded by $R\cdot\left(\left|\frac{\delta_{\max}-\delta_{\min}}{2}\right|+\left|\frac{\alpha_{\max}-\alpha_{\max}}{2}\right|\right)$, where $R$ is the radius of the earth.

      The novelty in our approach lies in a method for evaluation the upper bound on the longest continuous section of sea. Let there be a great circle $\text{GC}\left(\delta_o,\alpha_o\right)$ where $\delta_o\in\left[\delta_{\min},\delta_{\max}\right]$ and $\alpha_o\in\left[\alpha_{\min},\alpha_{\max}\right]$ which has the longest continuous section of sea.

      \begin{equation}
        l\left(\delta_o,\alpha_o\right)=\max_{\substack{\delta\in\left[\delta_{\min},\delta_{\max}\right]\\\alpha\in\left[\alpha_{\min},\alpha_{\max}\right]}}l\left(\delta,\alpha\right). 
      \end{equation}

      That implies,

      \begin{equation}
        l\left(\delta,\alpha\right)\leq\max_{\substack{\delta\in\left[\delta_{\min},\delta_{\max}\right]\\\alpha\in\left[\alpha_{\min},\alpha_{\max}\right]}}l\left(\delta,\alpha\right),~\forall \delta \in \left[\delta_{\min},\delta_{\max}\right], \alpha \in \left[\alpha_{\min}, \alpha_{\max}\right].
      \end{equation}

      Further, consider the globe where the sea has eroded into the land by a distance $\epsilon$. In this new globe ($\epsilon$-eroded globe), the land masses have shrunk by a distance of epsilon from all sides. All pieces of land, that are within a distance of $\epsilon$ from the sea, are covered by sea on the $\epsilon$-globe. We define $l_{\epsilon}\left(\delta,\alpha\right)$ as the maximum distance that can be traversed along the great circle without hitting land on the $\epsilon$-eroded globe. Clearly,

      \begin{equation}
        l_{\epsilon}\left(\delta,\alpha\right)\geq\max_{\substack{\delta\in\left[\delta_{\min},\delta_{\max}\right]\\\alpha\in\left[\alpha_{\min},\alpha_{\max}\right]}}l\left(\delta,\alpha\right),~\forall \delta \in \left[\delta_{\min},\delta_{\max}\right], \alpha \in \left[\alpha_{\min}, \alpha_{\max}\right].
      \end{equation}

      Let's consider the case where $\epsilon\geq R\left(\left|\frac{\delta_{\max}-\delta_{\min}}{2}\right|+\left|\frac{\alpha_{\max}-\alpha_{\max}}{2}\right|\right)$. That is, the extent of erosion is larger that the maximum distance between any great circle in the set and the representative great circle. Since, the distance between the great circle $\left(\delta_o,\alpha_o\right)$ and the representative great circle is less that $\epsilon$, the longest section (over sea) of the great circle $\left(\delta_o,\alpha_o\right)$ would erode pieces of land that lie on the representative great circle. Thus, the longest section (over sea) of the representative great circle on the $\epsilon$-eroded globe would be at least as large as $l\left(\delta_o,\alpha_o\right)$.

      \begin{equation}
        l_{\epsilon}\left(\frac{\delta_{\min}+\delta_{\max}}{2},\frac{\alpha_{\min}+\alpha_{\max}}{2}\right)\geq l\left(\delta,\alpha\right).
      \end{equation}

      Hence the upper bound of $l\left(\delta,\alpha\right),\forall\delta \in \left[\delta_{\min},\delta_{\max}\right],\alpha\in\left[\alpha_{\min},\alpha_{\max}\right]$ can be evaluated as the length of the longest stretch of the representative great circle on the $\epsilon$-eroded globe such that $\epsilon\geq R\left(|\frac{\delta_{\max}-\delta_{\min}}{2}|+ |\frac{\alpha_{\max}-\alpha_{\max}}{2}|\right)$.

      The erosion should technically be carried out in three dimensions (assuming that the angular land-water boundaries continue till the centre). However, this would be computationally intensive, to store and calculate at the resolution required. Since the ETOPO1 database is equivalent to the equirectangular projection, in the two-dimensional context, the eroding element would have to be continuously deformed as per the Tissot's indicatrix for equirectangular projection, as shown in Figure~\ref{fig:tissot}. Continuously deforming the eroding element takes the problem to using non-standard algorithms for erosion.

      \begin{figure}[htbp]
        \centering
        \includegraphics[width=\textwidth]{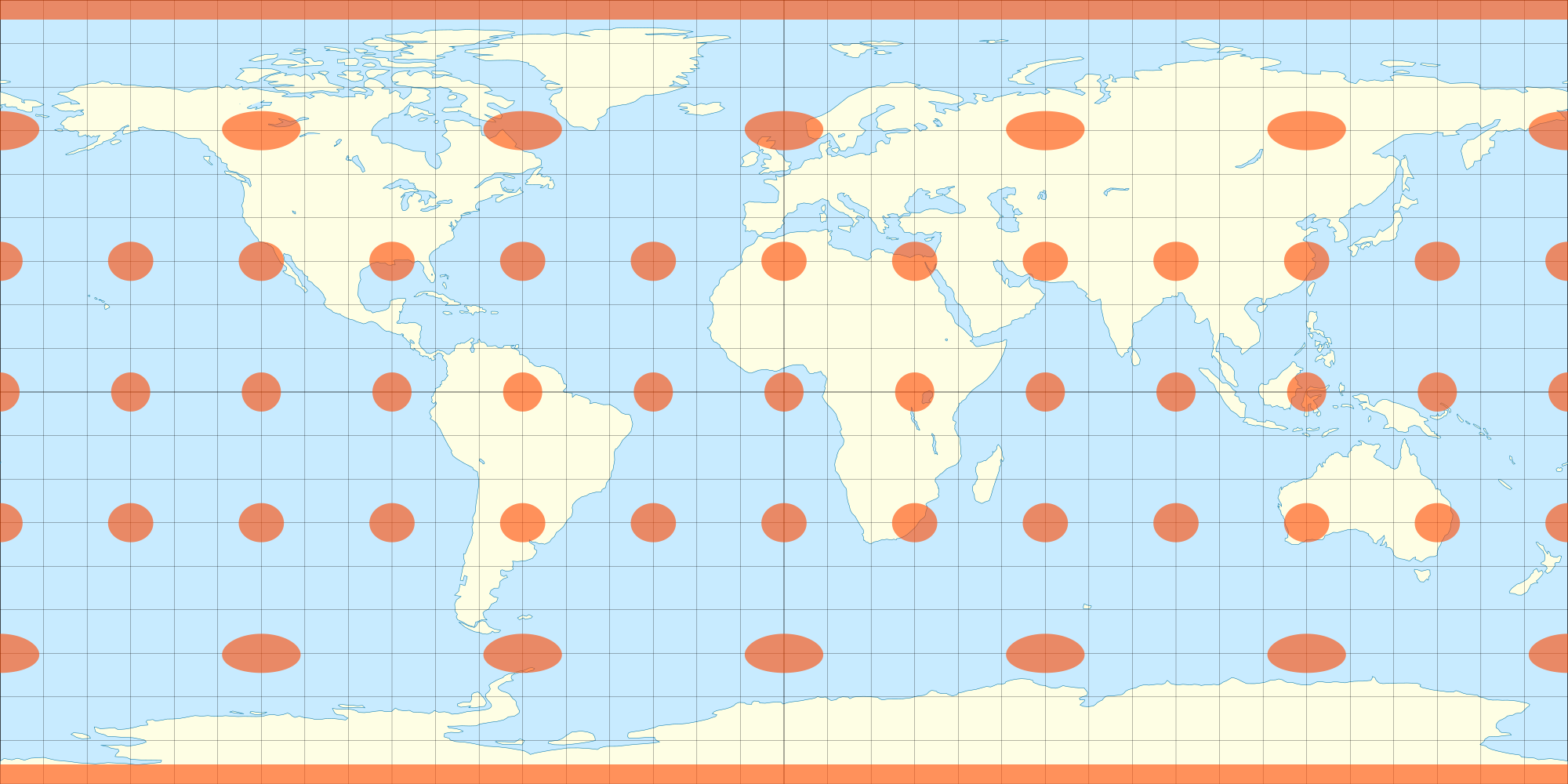}
        \caption{The equirectangular projection with Tissot's indicatrix of deformation, from \cite{tissot}.}
        \label{fig:tissot}
      \end{figure}

      Instead, for speed and complexity, we choose to use a single over-estimated erosion radius for the whole map. Over-estimating the erosion radius reduces the relaxation, and hence might potentially affect the computation required, but does not affect the results. The factor for over-estimation, however, needs to be chosen with care. It is certain, due to the existence of Antarctica (for water), and the absence of land near the North Pole, that the great circle with maximum distance won't extend beyond $78^\circ28'$ North or South. Thus the erosion only needs to be over-estimated till that latitude, giving us a reasonable factor of 5.

      \subsection{Branch-And-Bound Algorithm}

      Algorithm \ref{bbalg} depicts the standard branch-and-bound algorithm that has been developed to address the problem of finding the longest straight line path over sea. The successive branching of the set of trajectories into four subsets (by dividing the origin and heading into two subsets each) based on their upper bounds enables the algorithm to quickly converge at the solution.

      \begin{algorithm}
        \caption{Algorithm to obtain the great circle with the longest section over sea}\label{bbalg}
        \begin{algorithmic}[1]
          \State Initialise \texttt{Nodes} with one element: $\left(\delta_{\min}=0,\delta_{\max}=\pi,\alpha_{\min}=0,\alpha_{\max}=\pi,\epsilon=\pi R,B=2\pi R\right)$.
          \State Remove element $\left(\delta_{\min}^c,\delta_{\max}^c,\alpha_{\min}^c,\alpha_{\max}^c,\epsilon^c,B^c\right)$ with largest B from \texttt{Nodes}.
          \While {$\epsilon^c\geq\epsilon_{\text{threshold}}$}
          \State Add $\left(\delta_{\min}^c,\frac{\delta_{\max}^c+\delta_{\min}^c}{2},\alpha_{\min}^c,\frac{\alpha_{\max}^c+\alpha_{\min}^c}{2},\epsilon=\frac{\epsilon}{2},B=l_{\epsilon}\left(\frac{3\delta_{\min}^c+\delta_{\max}^c}{4},\frac{3\alpha_{\min}^c+\alpha_{\max}^c}{4}\right)\right)$ to \texttt{Nodes}.
          \State Add $\left(\frac{\delta_{\max}^c+\delta_{\min}^c}{2},\delta_{\max}^c,\alpha_{\min}^c,\frac{\alpha_{\max}^c+\alpha_{\min}^c}{2},\epsilon=\frac{\epsilon}{2},B=l_{\epsilon}\left(\frac{\delta_{\min}^c+3\delta_{\max}^c}{4},\frac{3\alpha_{\min}^c+\alpha_{\max}^c}{4}\right)\right)$ to \texttt{Nodes}.
          \State Add $\left(\delta_{\min}^c,\frac{\delta_{\max}^c+\delta_{\min}^c}{2},\frac{\alpha_{\max}^c+\alpha_{\min}^c}{2},\alpha_{\max}^c,\epsilon=\frac{\epsilon}{2},B=l_{\epsilon}\left(\frac{3\delta_{\min}^c+\delta_{\max}^c}{4},\frac{\alpha_{\min}^c+3\alpha_{\max}^c}{4}\right)\right)$ to \texttt{Nodes}.
          \State Add $\left(\frac{\delta_{\max}^c+\delta_{\min}^c}{2},\delta_{\max}^c,\frac{\alpha_{\max}^c+\alpha_{\min}^c}{2},\alpha_{\max}^c,\epsilon=\frac{\epsilon}{2},B=l_{\epsilon}\left(\frac{\delta_{\min}^c+3\delta_{\max}^c}{4},\frac{\alpha_{\min}^c+3\alpha_{\max}^c}{4}\right)\right)$ to \texttt{Nodes}.
          \State Remove element $\left(\delta_{\min}^c,\delta_{\max}^c,\alpha_{\min}^c,\alpha_{\max}^c,\epsilon^c,B^c\right)$ with largest B from \texttt{Nodes}.
          \EndWhile    
          \State Return $\left(\delta_{\min}^c,\delta_{\max}^c,\alpha_{\min}^c,\alpha_{\max}^c,\epsilon^c,B^c\right)$.
        \end{algorithmic}
      \end{algorithm}

      \section{Results}
      \subsection{Erosion}
      \begin{figure}[htbp]
        \centering
        \subfloat[Water Mass Erosion]{\includegraphics[width=\textwidth]{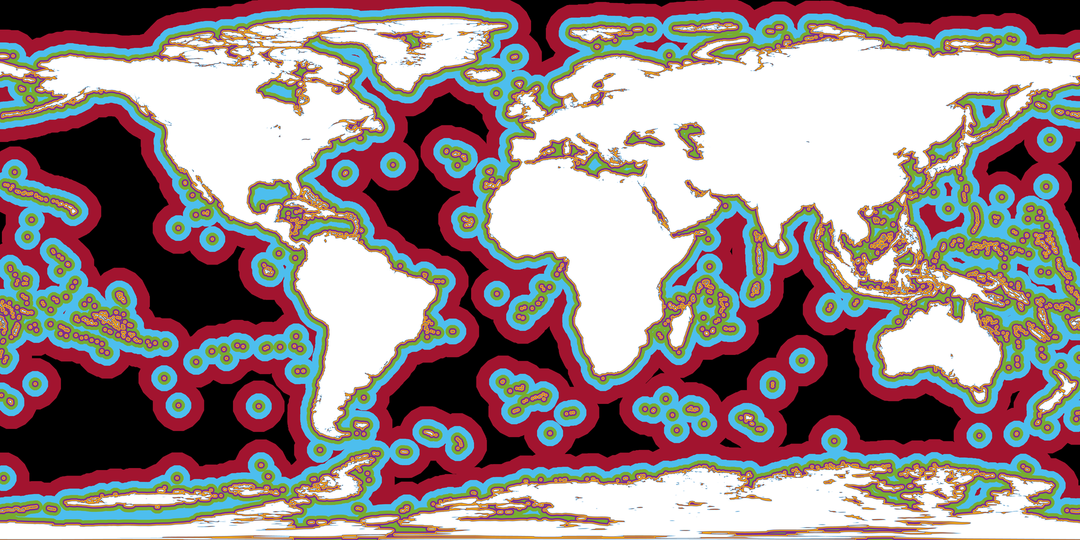}\label{fig:erosion1}}\\
        \subfloat[Land Mass Erosion]{\includegraphics[width=\textwidth]{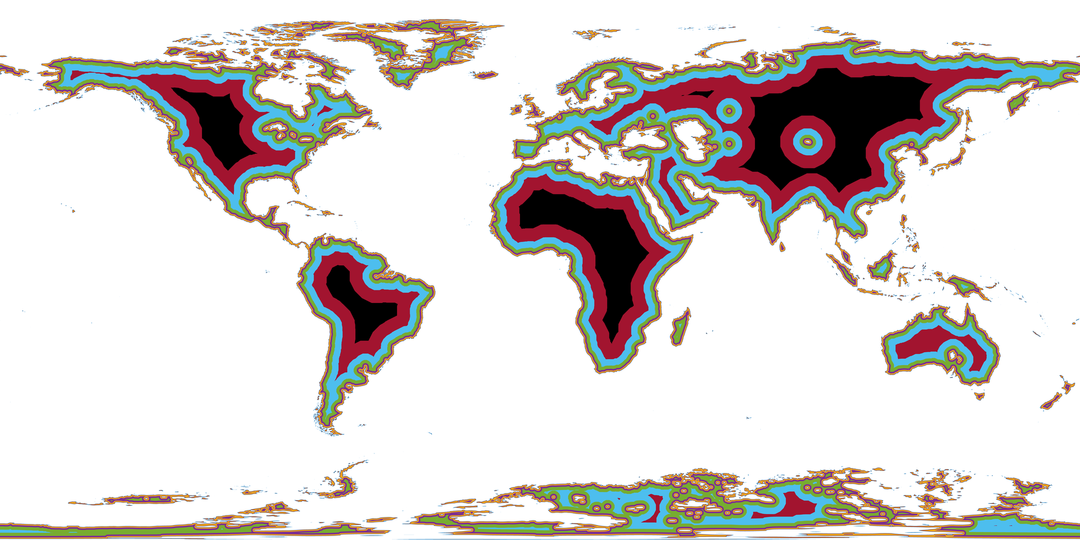}\label{fig:erosion2}}
        \caption{Erosion steps 3--7 for water (\ref{fig:erosion1}) and land (\ref{fig:erosion2}) masses respectively.}
      \end{figure}

      The erosion steps 3--7 for both water and land can be seen in Figures~\ref{fig:erosion1} and~\ref{fig:erosion2} respectively. The nuances of the erosion are better seen by zooming in on the British Isles, as has been done in Figures~\ref{fig:erosion1_ireland} and~\ref{fig:erosion2_ireland}.

      \begin{figure}[htbp]
        \centering
        \subfloat[Water Mass Erosion]{\includegraphics[width=\textwidth]{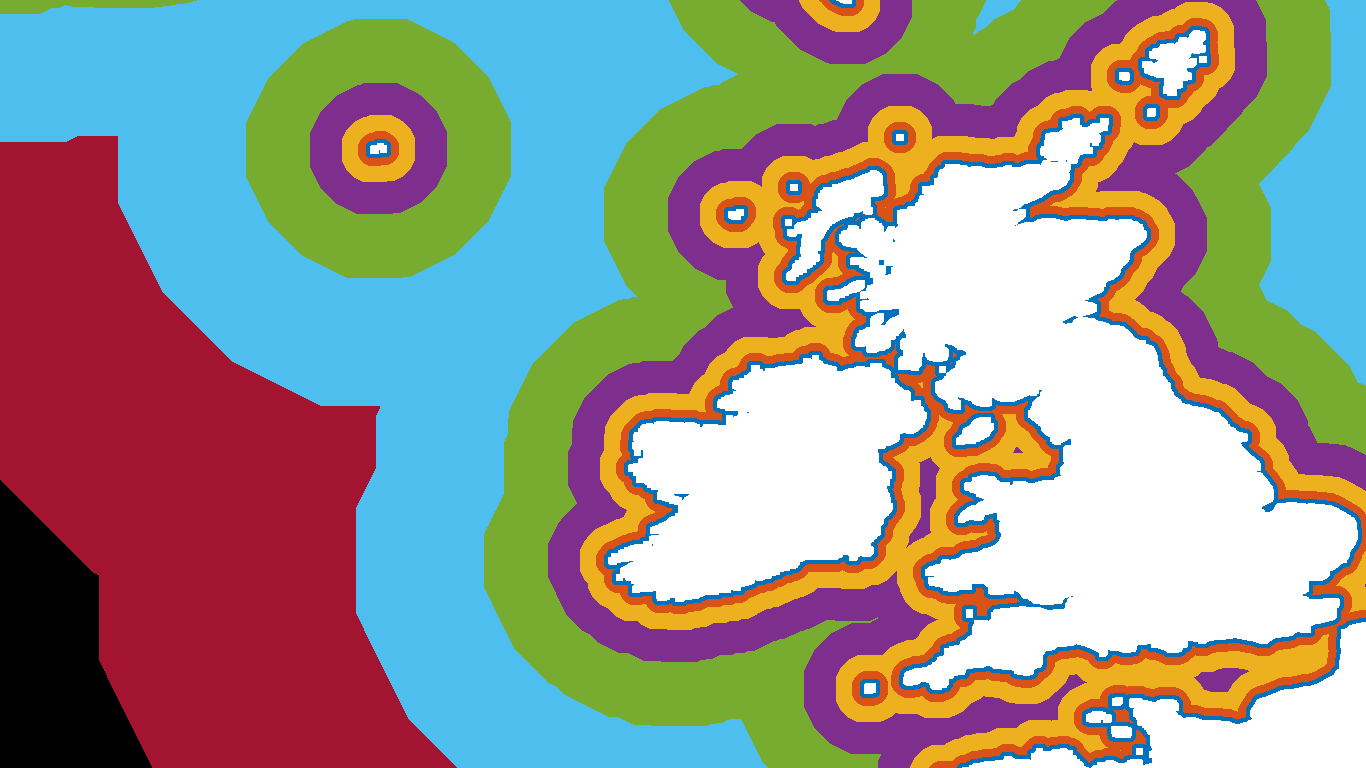}\label{fig:erosion1_ireland}}\\
        \subfloat[Land Mass Erosion]{\includegraphics[width=\textwidth]{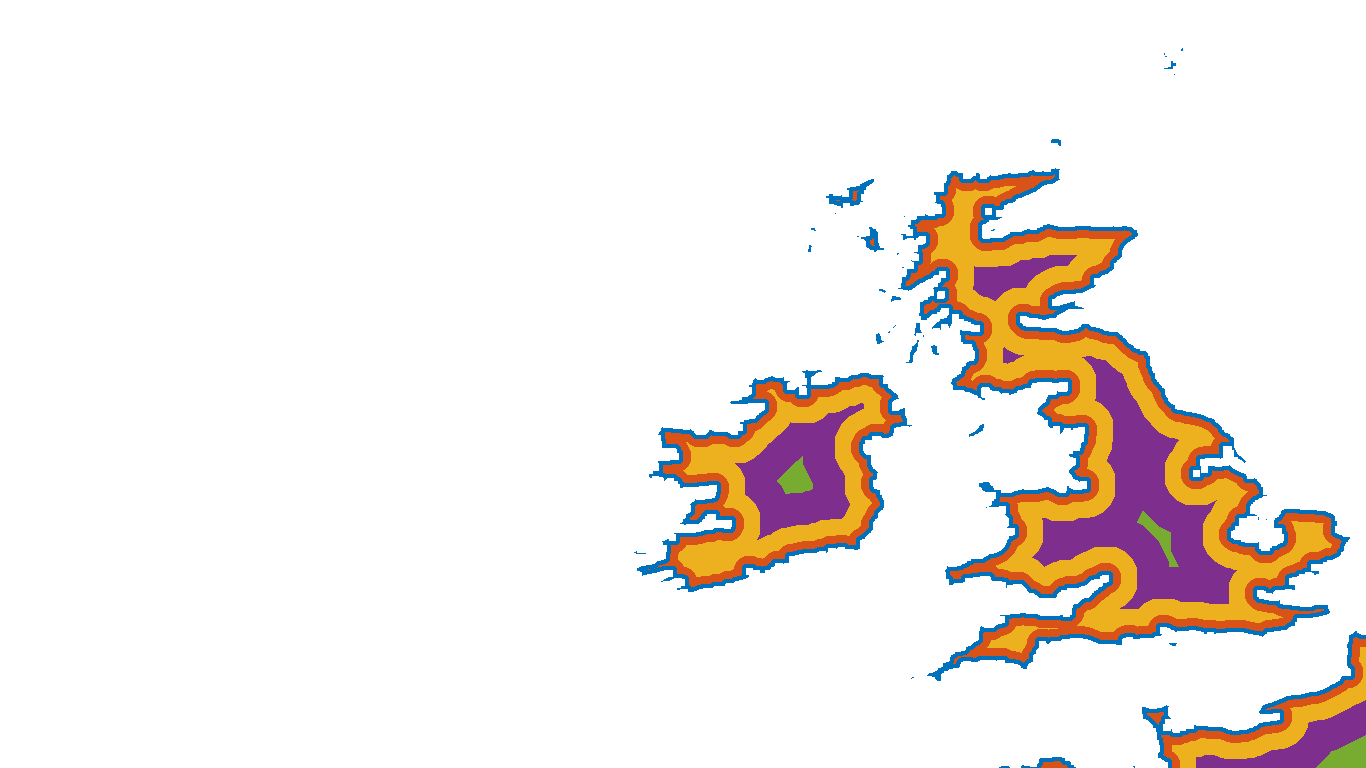}\label{fig:erosion2_ireland}}
        \caption{Erosion steps 3--7 for water (\ref{fig:erosion1_ireland}) and land (\ref{fig:erosion2_ireland}) masses respectively, around the British Isles.}
      \end{figure}

      \subsection{Longest Paths}
      The algorithm returned the longest path in about 10 minutes of computation for water path, and 45 minutes of computation for land path on a standard laptop.
      \subsubsection{Longest Sailable Straight Line Path on Earth}
      The algorithm returned the path shown in Figure~\ref{fig:waterpathmap}. Although it does not look like a straight line on the map, the algorithm using great circles ensures that it is.
      \begin{figure}[htbp]
        \centering
        \includegraphics[width=\textwidth]{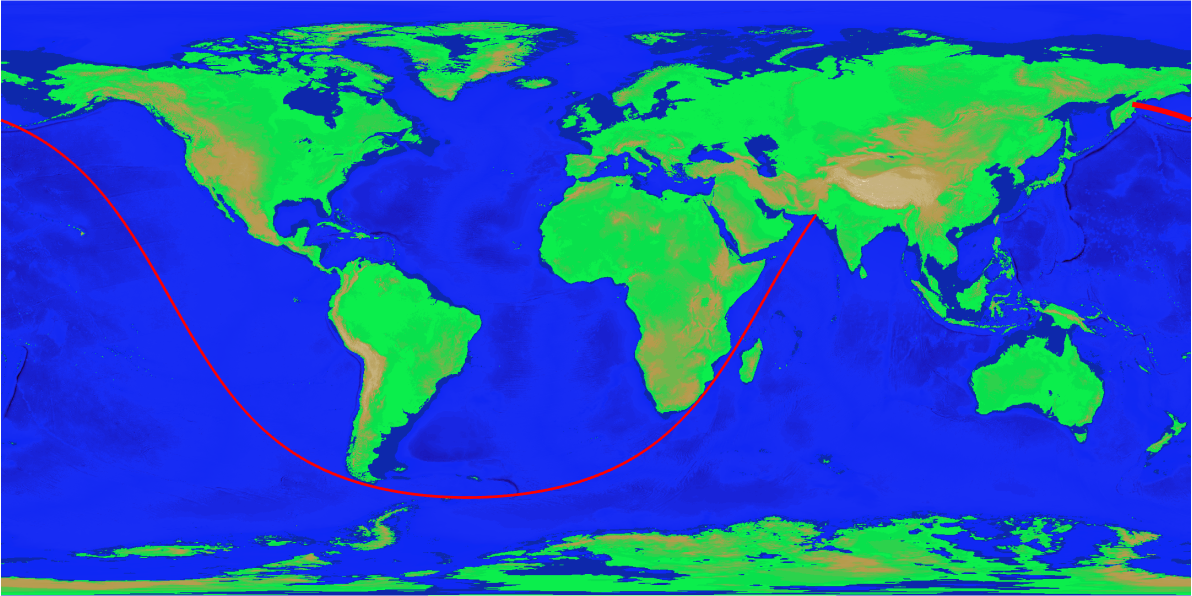}
        \caption{Longest Sailable Straight Line Path on Earth.}
        \label{fig:waterpathmap}
      \end{figure}

      The fact that it is indeed a straight line can be seen from Figures~\ref{fig:africa},~\ref{fig:southamerica}, and~\ref{fig:russia}, which are shown on a globe with a perspective from above the path. Figure~\ref{fig:africa} shows the path originating in Sonmiani, Las Bela, Balochistan, Pakistan ($25^\circ17'$ N, $66^\circ40'$ E), threading the needle between Africa and Madagascar, between Antarctica and Tiera del Fuego in South America (Figure~\ref{fig:southamerica}), and ending in Karaginsky District, Kamchatka Krai, Russia ($58^\circ37'$ N, $162^\circ14'$ E) (Figure~\ref{fig:russia}).

      The path covers an astounding total angular distance of $288^\circ35'$, for a distance of 32 090 kilometres. This path is visually the same one as found by \texttt{kepleronlyknows}, thus proving his assertion.
      
      \begin{figure}[htbp]
        \centering
        \subfloat[Start of Path]{\includegraphics[width=0.3\textwidth]{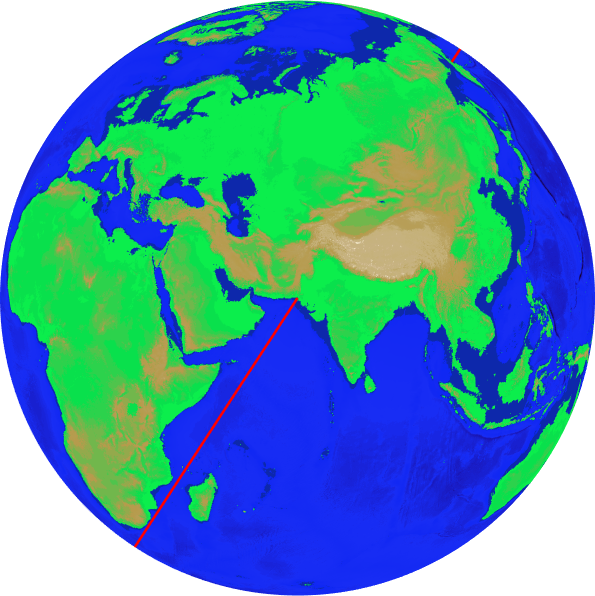}\label{fig:africa}}
        \subfloat[Threading the Needle]{\includegraphics[width=0.3\textwidth]{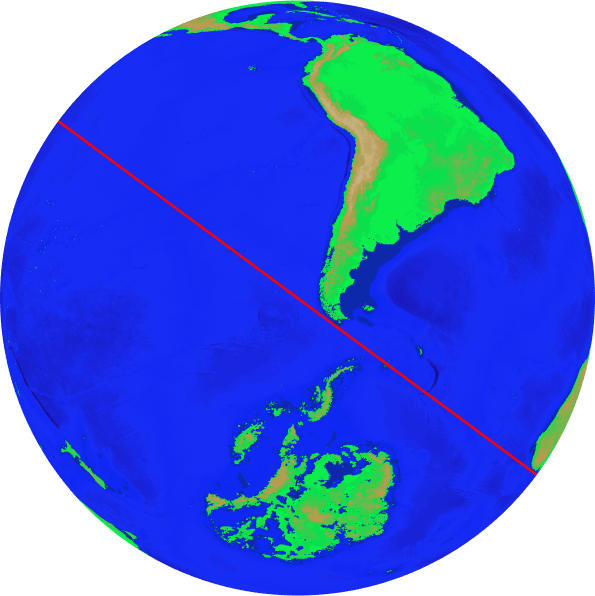}\label{fig:southamerica}}
        \subfloat[End of Path]{\includegraphics[width=0.3\textwidth]{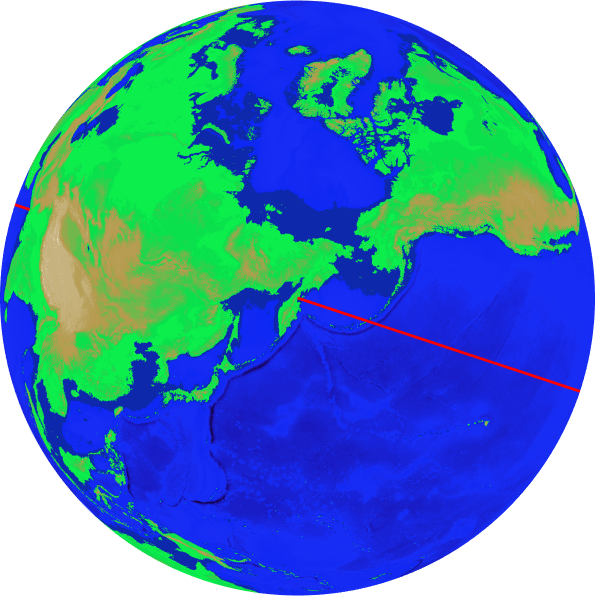}\label{fig:russia}}
        \caption{Longest Sailable Straight Line Path on Earth.}
      \end{figure}

      \subsubsection{Longest Drivable Straight Line Path on Earth}
      The algorithm returned the path shown in Figure~\ref{fig:landpathmap}. Again, although it does not look like a straight line on the map, the algorithm using great circles ensures that it is.
      \begin{figure}[htbp]
        \centering
        \includegraphics[width=\textwidth]{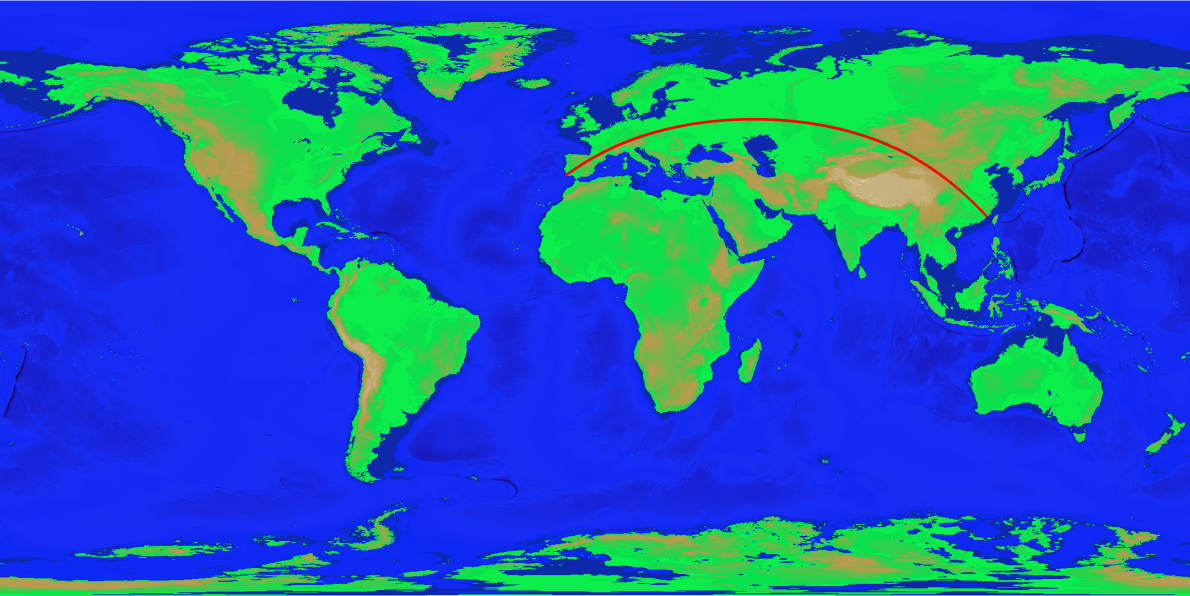}
        \caption{Longest Drivable Straight Line Path on Earth.}
        \label{fig:landpathmap}
      \end{figure}

      The fact that it is indeed a straight line can be seen from Figure~\ref{fig:path2}, which is shown on a globe with a perspective from above the path. Figure~\ref{fig:landpathmap} shows the path originating near Jinjiang, Quanzhou, Fujian, China ($24^\circ33'$ N, $118^\circ38'$ E), weaving through China and Mongolia for a bit, passing though Kazakhstan and Russia to further weave through Belarus and Ukraine, and passing through Poland, Czech Republic, Germany, Austria, Liechtenstein, Switzerland, France, and Spain, to end near Sagres, Portugal ($37^\circ2'$ N $8^\circ55'$ W), traversing a total of 15 countries. 

      Although not as long as the longest sailable path, the longest drivable path covers a still-respectable total angular distance of $101^\circ6'$, for a distance of 11 241 kilometres.

      \begin{figure}[htbp]
        \centering
        \includegraphics[width=0.3\textwidth]{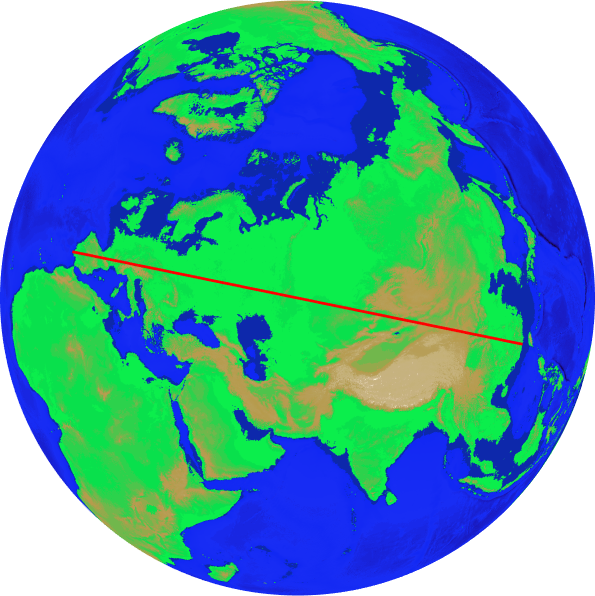}
        \caption{Longest Drivable Straight Line Path on Earth.}
        \label{fig:path2}
      \end{figure}

      \section{Limitations and Future Work}
      As mentioned in Section~\ref{assumptions}, our approach assumes the partition between land and water to be determined by the height with respect to the mean sea level. This unfortunately discounts, among other features, highland rivers, and low-lying plains. We also cannot consider bridges, although the probability of a bridge lying at the precise location and direction of any given path is negligible.

      Secondly, the ETOPO1 database has a resolution of 1-arcminute, about 1.8 kilometres at the equator. The optimisation problem, therefore, has by necessity used this as the lower-limit of accuracy for analysis.

      Both of these factors are the result of the input dataset.

      Thirdly, an implicit assumption is that the Earth is perfectly spherical. This is untrue because of the ellipsoidal shape of the earth, as well as due to geoid undulations.

      Future work would involve running the analysis on a higher resolution dataset, or on one that provides the shape of all the land and water bodies in higher detail. It would also require us to stop using spherical trigonometry, or at least, consider deviations due to the non-spherical nature of the geoid, using local curvatures. Consideration all of these issues, and more, does seem to be an overkill for what is essentially a recreational problem.
      
      \section{Conclusion}

      We proposed an innovative approach for relaxation of an optimisation problem for utilising the branch-and-bound algorithm. On the way, we managed to prove that \texttt{kepleronlyknows} was right about the longest sailable straight line path on the Earth, and we found the analogous longest drivable straight line path.

      The problem was approached as a purely mathematical exercise. The authors do not recommend sailing or driving along the found paths.

\end{document}